\newtheorem{thm}{Theorem}
\newtheorem{prb}{Problem}
\title{How to decide whether two convex octahedra \\
       are affinely equivalent using their natural \\
       developments only}
\author{Victor Alexandrov \\ \textit{Sobolev Institute of Mathematics, Novosibirsk, Russia} \& \\ 
\textit{Department of Physics, Novosibirsk State University, Russia} \\ \texttt{alex@math.nsc.ru}}
\begin{document}
\maketitle

\vskip-5mm\hfill\begin{minipage}[t]{6.3 cm}
\textit{Dedicated to Professor Hellmuth Stachel on the occasion of his 80th birthday}
\end{minipage}

\begin{abstract}
Given two convex octahedra in Euclidean 3-space, we find conditions on their natural developments 
which are necessary and sufficient for these octahedra to be affinely equivalent to each other.
\par
\textit{Keywords}: affine transformation, octahedron, natural development, Cayley--Menger determinant, 
spatial shape of a polyhedron
\par
\textit{Mathematics subject classification (2010)}: 52C25, 52B10, 68U05
\end{abstract}

\section{Introduction}\label{s:1}

The question we shall address in this article is this:
\begin{prb}\label{prb:1}
Given two convex octahedra in Euclidean 3-space, is it possible to decide whether they are affinely equivalent 
to each other using their natural developments only?
\end{prb}

From our point of view, Problem \ref{prb:1} is similar to the problem of recognizing congruent convex polyhedra 
whose solution is given by the famous Cauchy rigidity theorem:
\textit{If natural developments of two convex closed polyhedra in Euclidean 3-space are isometric 
then these polyhedra are congruent to each other.} 
Initially, this theorem was proved by A.L.~Cauchy in 1813 in \cite{Ca13}.
An extensive literature is devoted to the Cauchy rigidity theorem and its generalizations, 
from which we mention the monograph \cite{Al05}, review article \cite{Co93}, scientific article \cite{RR00}, 
and popular science book \cite{FT07}, where the reader can find further references.

An analog of Problem~\ref{prb:1} for general polyhedra in Euclidean 3-space is considered in \cite{Al21},
where necessary conditions for the affine equivalence of polyhedra are found.
In this article, we present a similar approach as applied to octahedra.
This allows us to obtain both necessary and sufficient conditions.

The main result of this article is Theorem~\ref{thm:1} from Section~\ref{s:4}.

We choose octahedra as the object of our study for two reasons.
First, because they are the simplest polyhedra in terms of combinatorial structure with no trivalent vertices 
(the latter simplify the problem of recognition of affinely equivalent polyhedra 
by their natural developments, see~\cite{Al21}). 
Second, because historically octahedra played an intriguing role in the proof of the Cauchy rigidity theorem
(see~\cite[p.~446]{Sa11}) while our study is motivated by that theorem.

\section{Terminology and preliminaries}\label{s:2}

\subsubsection*{Octahedron}
We say that a polyhedral surface in~$\mathbb{R}^3$ is an \textit{octahedron} if it is combinatorially equivalent to the regular convex octahedron in~$\mathbb{R}^3$ and any two adjacent faces do not lie in the same plane.

A straight line segment $xy$ is called a \textit{diagonal} of an octahedron $P$ if 
$x$ and $y$ are vertices of $P$ and $xy$ is not an edge of $P$.

An octahedron $P$ is called \textit{convex} if, for every its face $F$, the three vertices of $P$ which are
not incident to $F$, are contained in a single open halfspace determined by $\mbox{aff\,} F$, the affine hull of $F$.

We leave the proofs of the following statements to the reader:
every convex octahedron $P$ is the boundary of a bounded convex set $B$ with nonempty interior, i.e., of a convex body;
moreover, $B$ contains every diagonal of $P$.

\subsubsection*{Natural development}
The concept of a natural development of an octahedron is intuitively obvious to anyone who has 
ever glued an octahedron from a set of its faces which are cut out from cardboard.
Avoiding unnecessary details, we give the following more or less formal definition.

A set consisting of 8 triangles in the plane is called the \textit{natural development}~$R$ of an octahedron~$P$
if it is equipped with ``gluing rules'' and the following conditions are fulfilled:

{\scriptsize$\bullet$} $R$ is in one-to-one correspondence with the set of the faces of~$P$;

{\scriptsize$\bullet$} each triangle of~$R$ is congruent to the corresponding face of~$P$;

{\scriptsize$\bullet$} vertices of different triangles of~$R$ are identified with each other (or ``are glued together'') 
if and only if they correspond to the same vertex of~$P$;

{\scriptsize$\bullet$} sides of different triangles of~$R$ are identified with each other (or ``are glued together'') 
if and only if they correspond to the same edge of~$P$.

\subsubsection*{Cayley--Menger determinant}
Let $M$ be an abstract set and $\rho:M\times M\to[0, +\infty)\subset\mathbb{R}$ be a map
such that $\rho(x,y)=\rho(y,x)$ for all $x,y\in M$ and $\rho(x,y)=0$ if and only if $x=y$.
Then the pair $(M,\rho)$ is called a \textit{semimetric space}, see \cite[Definition 5.1, p. 7]{Bl70}, and,
for every $\{x_0, x_1,\dots, x_k\}\subset M$, the expression
\begin{equation*}
\operatorname{cm}(x_0, x_1, \dots, x_k)\stackrel{\textrm{def}}{=}
\left|
\begin{array}{ccccc}
0 & 1           & 1            & \dots       & 1         \\
1 & 0           & \rho^2(x_0,x_1)     & \dots       & \rho^2(x_0,x_k)  \\
1 & \rho^2(x_1,x_0)    & 0            & \dots       & \rho^2(x_1,x_k)  \\
. & .           & .            & .           & .         \\
1 & \rho^2(x_k,x_0)    & \rho^2(x_k,x_1)     & \dots       & 0 
\end{array}
\right| 
\end{equation*}
is called the \textit{Cayley--Menger determinant} of $x_0, x_1,\dots, x_k$, see
\cite[\S~40, p. 97]{Bl70}.

If $M=\{x_0, x_1,\dots, x_n\}\subset \mathbb{R}^n$ and a metric $\rho$ in $M$ 
is induced by the Euclidean metric of $\mathbb{R}^n$
then $\operatorname{vol}(x_0, x_1, \dots, x_n)$, the $n$-dimensional volume of the simplex 
with the vertices $x_0, x_1, \dots, x_n$, is related to the Cayley--Menger determinant of 
$x_0, x_1, \dots, x_n$ by the following formula, see \cite[\S~40, p. 98]{Bl70}:
\begin{equation}
[\operatorname{vol}(x_0, x_1, \dots, x_n)]^2=
\frac{(-1)^{n+1}}{2^n n!}
\operatorname{cm}(x_0, x_1, \dots, x_n).
\label{eq:1}
\end{equation}

We say that a semimetric space $(M,\rho)$ \textit{embeds isometrically} in $\mathbb{R}^n$
if there is $f:M\to\mathbb{R}^n$ such that $|f(x)-f(y)|=\rho(x,y)$ for every $x,y\in M$, 
where $|f(x)-f(y)|$ stands for the Euclidean distance between $f(x), f(y)\in\mathbb{R}^n$.

Necessary and sufficient conditions for a semimetric space $(M,\rho)$ to embed isometrically 
in $\mathbb{R}^n$ are given by the following

\begin{thm}[K.~Menger, 1928]\label{thm:1}
For every semimetric space $M$, the following statements \emph{(i)} and \emph{(ii)} are equivalent to each other:
\par
\emph{(i)} $M$ embeds isometrically in $\mathbb{R}^n$, but not in $\mathbb{R}^{n-1}$;
\par
\emph{(ii)} there are $n+1$ points $x_0, x_1, \dots, x_n$ in $M$ such that 
\begin{equation}
(-1)^{k+1} \operatorname{cm}(x_0, x_1, \dots, x_k)>0
\label{eq:2}
\end{equation}
 for all $k=1,2,\dots, n$ and
\begin{alignat}{2}
&\operatorname{cm}(x_0, x_1, \dots, x_n, x) &\, =\, & 0,
\label{eq:3}\\
&\operatorname{cm}(x_0, x_1, \dots, x_n, y) &\, =\, & 0,
\label{eq:4}\\
&\operatorname{cm}(x_0, x_1, \dots, x_n, x,y) &\, =\, & 0 
\label{eq:5}
\end{alignat}
for every pair of points $x,y$ of $M$. 
\end{thm}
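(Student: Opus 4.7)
The plan is to prove the equivalence by establishing both implications separately. The direction (i) $\Rightarrow$ (ii) I expect to be routine, while (ii) $\Rightarrow$ (i) is substantive and requires an explicit construction of the embedding.

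For (i) $\Rightarrow$ (ii), assume an isometric embedding $f \colon M \to \mathbb{R}^n$ whose image is not contained in any $(n-1)$-flat. Because the image affinely spans $\mathbb{R}^n$, I can choose $n+1$ points $x_0, \ldots, x_n \in M$ whose images are affinely independent. Each sub-simplex spanned by $f(x_0), \ldots, f(x_k)$ is non-degenerate in $\mathbb{R}^k$, so formula~(\ref{eq:1}) immediately delivers the sign condition~(\ref{eq:2}). For the vanishing conditions~(\ref{eq:3})--(\ref{eq:5}), I apply the same formula one or two dimensions higher: any $n+2$ or $n+3$ points of $\mathbb{R}^n$ form degenerate simplices of volume zero, so their Cayley--Menger determinants vanish.

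For (ii) $\Rightarrow$ (i), I construct $f \colon M \to \mathbb{R}^n$ in three steps. \emph{Step 1}: embed $x_0, \ldots, x_n$ as a non-degenerate $n$-simplex by induction on $k$. At stage $k$, the sign condition~(\ref{eq:2}) together with~(\ref{eq:1}) forces the squared altitude of $x_k$ over the affine hull of the already-placed points to be strictly positive, so $x_k$ admits a placement in $\mathbb{R}^k \subset \mathbb{R}^n$. \emph{Step 2}: for every $x \in M$, define $f(x) \in \mathbb{R}^n$ by the requirement $|f(x) - f(x_i)| = \rho(x, x_i)$ for $i = 0, \ldots, n$. Uniqueness is immediate from the affine independence of $f(x_0), \ldots, f(x_n)$, since the coordinates of $f(x)$ are determined by a full-rank linear system; existence amounts to a single scalar compatibility condition, precisely~(\ref{eq:3}).

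\emph{Step 3} is the main difficulty: checking that $|f(x) - f(y)| = \rho(x, y)$ for every $x, y \in M$. My approach is to view $P(t) := \operatorname{cm}(x_0, \ldots, x_n, x, y)$ as a polynomial in the single variable $t := \rho^2(x, y)$, all other pairwise distances being held fixed. Condition~(\ref{eq:5}) then reads $P(\rho^2(x,y)) = 0$, while formula~(\ref{eq:1}) applied to the $n+3$ points $f(x_0), \ldots, f(x_n), f(x), f(y) \in \mathbb{R}^n$ gives $P(|f(x) - f(y)|^2) = 0$. A direct expansion shows that $P$ is a genuine quadratic in $t$ with leading coefficient proportional to $\operatorname{cm}(x_0, \ldots, x_n)$, which is nonzero by~(\ref{eq:2}). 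The decisive geometric observation is that in an ambient $\mathbb{R}^{n+1}$ the two roots of $P$ correspond to the two mirror placements of $y$ across the hyperplane $\operatorname{aff}\{f(x_0), \ldots, f(x_n)\}$; but $f(x)$ lies in this hyperplane by~(\ref{eq:3}), so reflection across it preserves $|f(x) - f(y)|$, and the two roots collapse. Consequently $|f(x) - f(y)|^2$ is a double root of $P$, and~(\ref{eq:5}) forces $\rho(x, y) = |f(x) - f(y)|$, completing the construction of the isometry.
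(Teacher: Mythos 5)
The paper gives no proof of this theorem: it is quoted as a classical result of Menger with a pointer to Blumenthal's book, so your argument has to stand entirely on its own. Your direction (i) $\Rightarrow$ (ii), and Steps 1--2 of (ii) $\Rightarrow$ (i), follow the standard route and are essentially sound (Step 2 rests on the routine identity expressing $\operatorname{cm}(x_0,\dots,x_n,x)$ as a nonzero multiple of $\operatorname{cm}(x_0,\dots,x_n)$ times the squared residual altitude, which is what makes (\ref{eq:3}) ``precisely'' the compatibility condition). The genuine gap is in Step 3, at the sentence asserting that ``the two roots of $P$ correspond to the two mirror placements of $y$.'' This is not a general fact: vanishing of a Cayley--Menger determinant is necessary but not sufficient for Euclidean realizability, so a root of $P$ need not come from any placement of $y$. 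Worse, in the situation you are actually in, condition (\ref{eq:4}) forces the altitude of $y$ over $\operatorname{aff}\{f(x_0),\dots,f(x_n)\}$ to vanish, so the ``two mirror placements'' are literally the same point; exhibiting that single placement shows only that $|f(x)-f(y)|^2$ is \emph{a} root of the quadratic, and leaves open the possibility of a second, different root --- which is exactly where $\rho^2(x,y)$ could hide. The double-root claim is the crux of the whole theorem, and as written it is asserted rather than proved.

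Two standard ways to close the gap. First, apply the Desnanot--Jacobi (Sylvester) identity to the bordered matrix, deleting the rows and columns of $x$ and $y$: it yields $\operatorname{cm}(x_0,\dots,x_n,x,y)\cdot\operatorname{cm}(x_0,\dots,x_n)=\operatorname{cm}(x_0,\dots,x_n,x)\cdot\operatorname{cm}(x_0,\dots,x_n,y)-E(t)^2$, where the mixed cofactor $E(t)$ is linear in $t=\rho^2(x,y)$ while the first two factors on the right are independent of $t$. By (\ref{eq:3}) and (\ref{eq:4}) the right-hand side reduces to $-E(t)^2$, so $P(t)$ is a nonzero constant times a perfect square and has a unique (double) root; since $|f(x)-f(y)|^2$ and, by (\ref{eq:5}), $\rho^2(x,y)$ are both roots, they coincide. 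Alternatively, perturb: lift $f(x)$ and $f(y)$ to heights $\varepsilon$ and $\varepsilon'$ in $\mathbb{R}^{n+1}$; for $\varepsilon\varepsilon'\neq 0$ the two mirror placements of the lifted $y$ produce the two \emph{distinct} roots $|f(x)-f(y)|^2+(\varepsilon'\mp\varepsilon)^2$ of the perturbed quadratic (whose leading coefficient $-\operatorname{cm}(x_0,\dots,x_n)$ is unchanged), and letting $\varepsilon,\varepsilon'\to 0$ gives the double root by continuity of roots. A last, minor omission: in (ii) $\Rightarrow$ (i) you should also record that $M$ does not embed in $\mathbb{R}^{n-1}$, which is immediate from (\ref{eq:2}) with $k=n$ together with formula (\ref{eq:1}).
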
 

The classic reference for Theorem~\ref{thm:1} is~\cite[Theorem 42.2, p. 104]{Bl70}.
A modern exposition of Theorem 1 for the case of metric spaces may be found in ~\cite{BB17}.

\section{Necessary conditions}\label{s:3}
Throughout this Section, we denote by $P$ and $P'$ two fixed octahedra in $\mathbb{R}^3$.

By $x_i$, $i=0,\dots, 5$, denote the vertices of $P$.
Throughout this article, we fix their special enumeration, an example of which is shown in Fig.~\ref{fig:1}.
Such a special enumeration is obtained in the following way.
Chose an edge $e$ of $P$ arbitrarily and denote by $x_0$ and $x_1$ the vertices of $P$ incident to $e$. 
For $i=2,3$, by $x_i$ denote a vertex of $P$ incident to a face of $P$ containing $e$.
By $x_4$ (resp., $x_5$) denote a vertex of $P$ such that the straight line segment $x_1x_4$ 
(resp., $x_0x_5$) is a diagonal of $P$.
\begin{figure}[!hbt]
\centering
\includegraphics[width= 35 mm ]{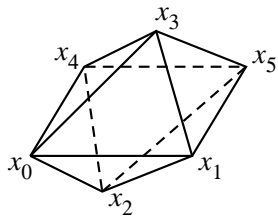}
\caption{Special enumeration of the vertices of an octahedron~$P$}
\label{fig:1}
\end{figure}

By $\langle x_i, x_j\rangle$ denote the edge of $P$ incident to the vertices  $x_i, x_j$ if such an edge exists.
Similarly, by $\langle x_i, x_j, x_k\rangle$ denote the face of $P$ incident to the vertices  $x_i, x_j, x_k$ 
if such a face exists.

Consider the set $V=\{x_0,\dots,x_5\}$ endowed with a metric induced from $\mathbb{R}^3$.
For the convenience of further presentation, we denote the Euclidean distance 
$|x_i-x_j|$ between $x_i, x_j\in V$ by $d_{ij}$ if $x_i$ and $x_j$ are joined by an edge of $P$, 
and by $\delta_{ij}$ otherwise.
In other words, we denote $|x_i-x_j|$ by $d_{ij}$ if this distance is given to us directly in 
the natural development of $P$, and denote it by $\delta_{ij}$ otherwise.
We choose different notations $d_{ij}$ and $\delta_{ij}$ because, when solving Problem~\ref{prb:1} in 
Section~\ref{s:4}, we consider $\delta_{ij}$ as unknown quantities, the values of which have to be found 
in the course of the solution.

Throughout Section~\ref{s:3}, we denote by $x'_i$, $i=0,\dots, 5$, the vertices of $P'$
and assume that~$P'$ is affinely equivalent to $P$ in the sense that there exists an affine 
transformation $A:\mathbb{R}^3\to\mathbb{R}^3$ such that $A(P)=P'$ and $A(x_i)=x'_i$ for all $i=0,\dots, 5$, 
i.\,e., that $A$ does not change the numbering of the vertices.
The Euclidean distance between the vertices $x'_i$ and $x'_j$ of $P'$ is denoted by $d'_{ij}$ 
if $x'_i$ and $x'_j$ are connected by an edge of $P'$ and by $\delta'_{ij}$ otherwise.

\subsubsection*{The first group of necessary conditions}
Directly by its construction, the set $V=\{x_0,\dots,x_5\}$ endowed with the metric induced from $\mathbb{R}^3$ 
is a metric space which embeds isometrically in $\mathbb{R}^ 3$, but not in $\mathbb{R}^ 2$.
Hence, by Theorem~\ref{thm:1}, the inequalities (\ref{eq:2}) and equalities (\ref{eq:3})--(\ref{eq:5}) hold.
Let us write them down in detail.

The inequalities (\ref{eq:2}) take the form 
\begin{alignat*}{2}
&(-1)^2\operatorname{cm}(x_0, x_1) & \,>\, & 0,\\
&(-1)^3\operatorname{cm}(x_0, x_1, x_2) & \,>\, & 0,\\
&(-1)^4\operatorname{cm}(x_0, x_1, x_2, x_3)&\, >\, & 0 ,
\end{alignat*}
i.e.,
\begin{equation}
d^2_{01}>0, \qquad 
\left|
\begin{array}{cccc}
0 & 1         & 1              & 1         \\
1 & 0         & d^2_{01}       & d^2_{02}  \\
1 & d^2_{01}  & 0              & d^2_{12}  \\
1 & d^2_{02}  & d^2_{12}       & 0         \\ 
\end{array}
\right|<0,
\label{eq:6}
\end{equation}
\begin{equation}
\left|
\begin{array}{ccccc}
0 & 1         & 1              & 1               & 1               \\
1 & 0         & d^2_{01}       & d^2_{02}        & d^2_{03}        \\
1 & d^2_{01}  & 0              & d^2_{12}        & d^2_{13}        \\
1 & d^2_{02}  & d^2_{12}       & 0               & \delta^2_{23}   \\
1 & d^2_{03}  & d^2_{13}       & \delta^2_{23}   & 0              
\end{array}
\right|>0.
\label{eq:7}
\end{equation}
The inequalities (\ref{eq:6}) mean that $x_0\neq x_1$ 
and the area of the face $\langle x_0, x_1, x_2\rangle$ is not equal to zero.
The inequality (\ref{eq:7}) means that the volume of the tetrahedron $\langle x_0, x_1, x_2, x_3\rangle$ 
with the vertices $x_i$, $i=0,\dots, 3$ is nonzero.
According to our definition of an octahedron, (\ref{eq:6})--(\ref{eq:7}) are automatically satisfied.

Note that if we write (\ref{eq:7}) using only numerical data that is available 
to us from the natural development of~$P$ (i.e., using $d_{ij}$ only) then (\ref{eq:7}) imposes a
restriction on the possible values of the length $\delta_{23}$ of the diagonal $x_2x_3$ of $P$.

The equality (\ref{eq:3}), when written for~$P$, becomes $\operatorname{cm}(x_0, x_1, x_2, x_3, x_4){=}0$, i.e.,
\begin{equation}
\left|
\begin{array}{cccccc}
0 & 1         & 1              & 1               & 1               & 1              \\
1 & 0         & d^2_{01}       & d^2_{02}        & d^2_{03}        &  d^2_{04}      \\
1 & d^2_{01}  & 0              & d^2_{12}        & d^2_{13}        & \delta^2_{14}  \\
1 & d^2_{02}  & d^2_{12}       & 0               & \delta^2_{23}   & d^2_{24}       \\
1 & d^2_{03}  & d^2_{13}       & \delta^2_{23}   & 0               & d^2_{34}       \\
1 & d^2_{04}  & \delta^2_{14}  & d^2_{24}        & d^2_{34}        & 0
\end{array}
\right| =0.
\label{eq:8}
\end{equation}

The equality (\ref{eq:4}) takes the form $\operatorname{cm}(x_0, x_1, x_2, x_3, x_5)=0$, i.e.,
\begin{equation}
\left|
\begin{array}{cccccc}
0 & 1              & 1          & 1               & 1               & 1              \\
1 & 0              & d^2_{01}   & d^2_{02}        & d^2_{03}        & \delta^2_{05}  \\
1 & d^2_{01}       & 0          & d^2_{12}        & d^2_{13}        & d^2_{15}       \\
1 & d^2_{02}       & d^2_{12}   & 0               & \delta^2_{23}   & d^2_{25}       \\
1 & d^2_{03}       & d^2_{13}   & \delta^2_{23}   & 0               & d^2_{35}       \\
1 & \delta^2_{05}  & d^2_{15}   & d^2_{25}        & d^2_{35}        & 0
\end{array}
\right| =0.
\label{eq:9}
\end{equation}

Finally, the equality (\ref{eq:5}) becomes $\operatorname{cm}(x_0, x_1, x_2, x_3, x_4, x_5)=0$, i.e.,
\begin{equation}
\left|
\begin{array}{ccccccc}
0 & 1              & 1              & 1               & 1               & 1              & 1             \\
1 & 0              & d^2_{01}       & d^2_{02}        & d^2_{03}        & d^2_{04}       & \delta^2_{05} \\
1 & d^2_{01}       & 0              & d^2_{12}        & d^2_{13}        & \delta^2_{14}  & d^2_{15}      \\
1 & d^2_{02}       & d^2_{12}       & 0               & \delta^2_{23}   & d^2_{24}       & d^2_{25}      \\
1 & d^2_{03}       & d^2_{13}       & \delta^2_{23}   & 0               & d^2_{34}       & d^2_{35}      \\
1 & d^2_{04}       & \delta^2_{14}  & d^2_{24}        & d^2_{34}        & 0              & d^2_{45}      \\
1 & \delta^2_{05}  & d^2_{15}       & d^2_{25}        & d^2_{35}        & d^2_{45}       & 0
\end{array}
\right| =0.
\label{eq:10}
\end{equation}

The relations (\ref{eq:7})--(\ref{eq:10}) form the first group of necessary conditions for the affine equivalence 
of the octahedra $P$ and $P'$.
They must be satisfied by the quantities $\delta_{05}$, $\delta_{14}$, and $\delta_{23}$, 
which are the lengths of the diagonals of $P$ and which cannot be found directly from
the natural development of~$P$.

\subsubsection*{The second group of necessary conditions}
This group of conditions expresses the fact that $P$ is convex.

Let us start with a detailed explanation of the construction of one of the conditions of the second group of necessary conditions.
To do this, consider two tetrahedra $T_3=\langle x_0, x_1, x_2, x_3\rangle$ and 
$T_4=\langle x_0, x_1, x_2, x_4\rangle$ in $\mathbb{R}^3$.
They share the common face $\langle x_0, x_1, x_2\rangle$.
In principle, $T_3$ and $T_4$ can be located relative to each other in two ways: 
either so that they are contained in different closed halfspaces determined by the plane containing 
the face $\langle x_0, x_1, x_2\rangle$, or so that they are contained in one of these halfspaces.
Obviously, $|x_3-x_4|$ (and hence the numerical value of $d^2_{34}$) in the latter case is strictly less 
than in the first one.

In (\ref{eq:8}), substitute $t$ instead of $d^2_{34}$ (the other $d_{ij}$'s and $\delta_{14}$ are 
assumed to be fixed numbers borrowed from $P$).
As a result, we get
\begin{equation}
\left|
\begin{array}{cccccc}
0 & 1         & 1              & 1               & 1               & 1              \\
1 & 0         & d^2_{01}       & d^2_{02}        & d^2_{03}        &  d^2_{04}      \\
1 & d^2_{01}  & 0              & d^2_{12}        & d^2_{13}        & \delta^2_{14}  \\
1 & d^2_{02}  & d^2_{12}       & 0               & t               & d^2_{24}       \\
1 & d^2_{03}  & d^2_{13}       & t               & 0               & d^2_{34}       \\
1 & d^2_{04}  & \delta^2_{14}  & d^2_{24}        & d^2_{34}        & 0
\end{array}
\right| =0.
\label{eq:11}
\end{equation}
Let us rewrite (\ref{eq:11}) in the form $At^2+Bt+C=0$, where $A$, $B$ and $C$ are algebraic polynomials in 
$\delta_{14}$ and all variables $d_{ij}$ involved in (\ref{eq:11}). 
It is easy to see that
\begin{equation*}
A=-\left|
\begin{array}{cccc}
0 & 1         & 1               & 1              \\
1 & 0         & d^2_{01}        &  d^2_{04}      \\
1 & d^2_{01}  & 0               & \delta^2_{14}  \\
1 & d^2_{04}  & \delta^2_{14}   & 0
\end{array}
\right|=-\operatorname{cm}(x_0, x_1, x_4).
\end{equation*}
Hence, taking into account the formula (\ref{eq:1}), we get $A={2^2}{2!}[\operatorname{vol}(x_0, x_1, x_4)]^2>0$.
Here $\operatorname{vol}(x_0, x_1, x_4)$ denotes the area of the triangle with the vertices $x_0, x_1, x_4$, and
the inequality holds true because $\operatorname{vol}(x_0, x_1, x_4)\neq 0$.
The latter follows form the definition of an octahedron given in Section~\ref{s:2}.

Above, we mentioned a geometric argument based on the relative position of $T_3$, $T_4$.
It implies that the quadratic equation $At^2+Bt+C=0$ has two different positive real roots.
Let us denote them by $t_1$, $t_2$.
For definiteness, assume $0<t_1<t_2$. Then $t_1=d^2_{34}$,
$B^2-4AC>0$, $B=-A(t_1+t_2)< -2At_1$, and  $C=At_1t_2> At_1^2$.
For positive $t_1,t_2$, each of the last two inequalities is equivalent to $t_1<t_2$
and, thus, they are equivalent to each other.
Hence, below we can use only one of them, e.g., $C > At_1^2$ (or, what is the same, $C> Ad^4_{34}$).

Since $C=At^2+Bt+C$ for $t=0$ and $At^2+Bt+C$ is given by the formula (\ref{eq:11}), 
we can rewrite the inequality $C> Ad^4_{34}$ as 
\begin{equation}
\left|
\begin{array}{cccccc}
0 & 1         & 1              & 1               & 1               & 1              \\
1 & 0         & d^2_{01}       & d^2_{02}        & d^2_{03}        &  d^2_{04}      \\
1 & d^2_{01}  & 0              & d^2_{12}        & d^2_{13}        & \delta^2_{14}  \\
1 & d^2_{02}  & d^2_{12}       & 0               & 0               & d^2_{24}       \\
1 & d^2_{03}  & d^2_{13}       & 0               & 0               & d^2_{34}       \\
1 & d^2_{04}  & \delta^2_{14}  & d^2_{24}        & d^2_{34}        & 0
\end{array}
\right| > -d^4_{34}
\left|
\begin{array}{cccc}
0 & 1         & 1               & 1              \\
1 & 0         & d^2_{01}        &  d^2_{04}      \\
1 & d^2_{01}  & 0               & \delta^2_{14}  \\
1 & d^2_{04}  & \delta^2_{14}   & 0
\end{array}
\right|.
\label{eq:12}
\end{equation}
The inequality~(\ref{eq:12}) expresses the fact that the points $x_3$, $x_4$ are contained in one of
the two closed halfspaces determined by the plane containing the face $\langle x_0, x_1, x_2\rangle$ of $P$.
If we write (\ref{eq:12}) using only distances given to us in the natural development of $P$
(i.e., using $d_{ij}$'s only) 
then (\ref{eq:12}) imposes restrictions on the possible values of the length of the diagonal $x_1x_4$
(i.e., on $\delta_{14}$).

The inequality (\ref{eq:12}) belongs to the second group of necessary conditions for the affine equivalence 
of the octahedra $P$ and $P'$.
The other inequalities of that group are derived similarly to (\ref{eq:12}) according to the following instructions:

{\scriptsize$\bullet$} select one of the eight faces of $P$ and denote it by $\langle x_{i_0}, x_{i_1}, x_{i_2}\rangle$
(it will play the same role as the face $\langle x_0, x_1, x_2\rangle$ played in the above derivation 
of~(\ref{eq:12}));

{\scriptsize$\bullet$} select two of the three faces of $P$ incident to $\langle x_{i_0}, x_{i_1}, x_{i_2}\rangle$
and denote them by $\langle x_{i_0}, x_{i_1}, x_{i_3}\rangle$ and
$\langle x_{i_0}, x_{i_2}, x_{i_4}\rangle$ (they will play the same role as the faces 
$\langle x_0, x_1, x_3\rangle$ and $\langle x_0, x_2, x_4\rangle$ played in the above derivation of~(\ref{eq:12}));

{\scriptsize$\bullet$} apply the arguments described above for tetrahedra 
$T_3=\langle x_0, x_1, x_2, x_3\rangle$ and $T_4=\langle x_0, x_1, x_2, x_4\rangle$
to tetrahedra $\langle x_{i_0}, x_{i_1}, x_{i_2}, x_{i_3}\rangle$ and $\langle x_{i_0}, x_{i_1}, x_{i_2}, x_{i_4}\rangle$.

As a result, we get an inequality which is similar to~(\ref{eq:12}).
It expresses the fact that the points $x_{i_3}$ and $x_{i_4}$ are contained in one of
the two closed halfspaces determined by the plane containing the face $\langle x_{i_0}, x_{i_1}, x_{i_2}\rangle$ of $P$.
It is easy to understand that there will be $8\times 3=24$ of such inequalities.
Together they guarantee the convexity of~$P$.
We call them the second group of necessary conditions for the affine equivalence of~$P$ and $P'$.

\subsubsection*{The third group of necessary conditions}
This group of necessary conditions is constructed from~$P'$ in the same way as the first group was constructed from~$P$.
In other words, the third group of necessary conditions consists of four relations, each of which is obtained from 
(\ref{eq:7})--(\ref{eq:10}) by replacing every $d_{ij}$ by $d'_{ij}$ and every $\delta_{ij}$ by $\delta'_{ij}$.

\subsubsection*{The fourth group of necessary conditions}
This group is constructed from~$P'$ in the same way as the second group was constructed from~$P$.
It consists of 24 inequalities. All together they guarantee that~$P'$ is convex.

\subsubsection*{The fifth group of necessary conditions}
The relations of this group directly expresses the fact that $P$ and $P'$ are affine equivalent.
They are constructed as follows.

Select any of the 12 edges of $P$. Denote it by $\langle x_{i_0}, x_{i_1}\rangle$.
By $\langle x_{i_0}, x_{i_1}, x_{i_2}\rangle$ and $\langle x_{i_0}, x_{i_1}, x_{i_3}\rangle$
denote the two faces of $P$ incident to the edge $\langle x_{i_0}, x_{i_1}\rangle$.
According to our definition of an octahedron, the tetrahedron 
$\langle x_{i_0}, x_{i_1}, x_{i_2}, x_{i_3}\rangle$ has nonzero 3-volume,
$\operatorname{vol}\langle x_{i_0}, x_{i_1}, x_{i_2}, x_{i_3}\rangle$,
which is related to 3-volume of the tetrahedron $\langle x'_{i_0}, x'_{i_1}, x'_{i_2}, x'_{i_3}\rangle$,
$\operatorname{vol}\langle x'_{i_0}, x'_{i_1}, x'_{i_2}, x'_{i_3}\rangle$,
by the well-known formula 
$\operatorname{vol}(x'_{i_0}, x'_{i_1}, x'_{i_2}, x'_{i_3})=|\det A| \operatorname{vol}(x_{i_0}, x_{i_1}, x_{i_2}, x_{i_3})$.
Squaring the latter formula and using (\ref{eq:1}), we get 
\begin{equation*}
\operatorname{cm}(x'_{i_0}, x'_{i_1}, x'_{i_2}, x'_{i_3})= (\det A)^2\operatorname{cm}(x_{i_0}, x_{i_1}, x_{i_2}, x_{i_3})
\end{equation*}
or, which is the same,
\begin{equation}
\left|
\begin{array}{ccccc}
0 & 1                        & 1                        & 1                        & 1                            \\
1\vphantom{\int\limits^1} & 0        & {d'}^2_{\!\!i_{0}i_{1}}  & {d'}^2_{\!\!i_{0}i_{2}}  & {d'}^2_{\!\!i_{0}i_{3}}      \\
1\vphantom{\int\limits^1} & {d'}^2_{\!\!i_{0}i_{1}}  & 0              & {d'}^2_{\!\!i_{1}i_{2}}  & {d'}^2_{\!\!i_{1}i_{3}}      \\
1\vphantom{\int\limits^1} & {d'}^2_{\!\!i_{0}i_{2}}  & {d'}^2_{\!\!i_{1}i_{2}}  & 0      & {\delta'}^2_{\!\!i_{2}i_{3}} \\
1\vphantom{\int\limits^1} & {d'}^2_{\!\!i_{0}i_{3}}  & {d'}^2_{\!\!i_{1}i_{3}}  & {\delta'}^2_{\!\!i_{2}i_{3}}     & 0 
\end{array}
\right|
=\alpha
\left|
\begin{array}{ccccc}
0 & 1             & 1              & 1                   & 1                    \\
1\vphantom{\int\limits^1} & 0             & d^2_{i_0i_1}   & d^2_{i_0i_2}        & d^2_{i_0i_3}         \\
1\vphantom{\int\limits^1} & d^2_{i_0i_1}  & 0              & d^2_{i_1i_2}        & d^2_{i_1i_3}         \\
1\vphantom{\int\limits^1} & d^2_{i_0i_2}  & d^2_{i_1i_2}   & 0                   & \delta^2_{i_2i_3}    \\
1\vphantom{\int\limits^1} & d^2_{i_0i_3}  & d^2_{i_1i_3}   & \delta^2_{i_2i_3}   & 0              
\end{array}
\right|,
\label{eq:13}
\end{equation}
where $\alpha= (\det A)^2>0$.
All 12 equations forming the fifth group of necessary conditions 
may be obtained from (\ref{eq:13}) with a suitable choice of $x_{i_k}$, $k=0,1,2,3$.

\section{The main result}\label{s:4}
In Section~\ref{s:4}, we treat the relations which make up the five groups of necessary conditions for $P$ and $P'$
to be affinely equivalent to each other as algebraic equations and inequalities in seven unknowns  
$\delta_{05}$, $\delta_{14}$, $\delta_{23}$, $\delta'_{05}$, $\delta'_{14}$, $\delta'_{23}$, and $\alpha $, 
i.e., with respect to those variables whose values cannot be directly found from the natural developments~$R$ and $R'$ 
of $P$ and $P'$.
As we know from Section~\ref{s:3}, the coefficients of those algebraic equations and inequalities are expressed in 
terms of the variables whose values can be directly found from the natural developments $R$ and $R'$ of $P$ and $P'$, 
i.e., through $\{d_{ij}\}$, the set of the lengths of the edges $\langle x_i,x_j\rangle$ of $R$, and $\{d'_{ij}\}$, 
the set of the lengths of the edges $\langle x '_i,x'_j\rangle$ of $R'$.

The main result of this article is given by the following theorem:

\begin{thm}\label{thm:2}
Let $P$ and $P'$ be two convex octahedra in $\mathbb{R}^3$, and let $R$ and $R'$ be their natural developments;
then the following statements are equivalent:
\par
\emph{(a)} $P$ and $P'$ are affinely equivalent to each other;
\par
\emph{(b)} there are seven positive real numbers $\delta_{05}$, $\delta_{14}$, $\delta_{23}$, 
$\delta'_{05}$, $\delta'_{14}$, $\delta'_{23}$, and $\alpha$ 
such that all five groups of necessary conditions given in Section~\ref{s:3} are satisfied.
\end{thm}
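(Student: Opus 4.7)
The plan is to handle the easy direction (a)$\Rightarrow$(b) by a direct reading of Section~\ref{s:3} and to concentrate the work on (b)$\Rightarrow$(a), where seven numbers must actually be used to produce an affine equivalence.

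For (a)$\Rightarrow$(b), I would simply let $\delta_{ij}$ and $\delta'_{ij}$ be the genuine Euclidean lengths of the diagonals of $P$ and $P'$ and set $\alpha=(\det A)^2>0$, where $A$ is the given affine equivalence. All five groups of conditions are then automatic, since they were obtained in Section~\ref{s:3} as necessary consequences of exactly such a situation.

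For (b)$\Rightarrow$(a), the first step is to reconstruct $P$ and $P'$ from the given data. The first group is precisely the hypothesis of Menger's Theorem~\ref{thm:1} for the semimetric on $V=\{x_0,\dots,x_5\}$ whose edge-lengths are the $d_{ij}$ from $R$ and whose diagonal-lengths are the $\delta_{ij}$, so $V$ embeds isometrically in $\mathbb{R}^3$ (uniquely up to a rigid motion). The second group then guarantees that the resulting six-point configuration is a convex octahedron, and analogously the third and fourth groups produce $P'$. Next I would define the affine map $A:\mathbb{R}^3\to\mathbb{R}^3$ by $A(x_i)=x'_i$ for $i=0,1,2,3$; this is well-defined and invertible because $x_0,x_1,x_2,x_3$ are affinely independent by the first group. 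The fifth-group equation for the edge $\langle x_0,x_1\rangle$, together with formula~(\ref{eq:1}), gives $(\det A)^2=\alpha$. For each of the three further fifth-group equations whose tetrahedron is of the form $\langle x_{i_0},x_{i_1},x_{i_2},x_4\rangle$ with $i_0,i_1,i_2\in\{0,1,2,3\}$, the transformation rule for Cayley--Menger determinants under affine maps forces $A(x_4)$ and $x'_4$ to have the same \emph{unsigned} distance to the plane through $x'_{i_0},x'_{i_1},x'_{i_2}$. For the two such planes that happen to be faces of $P'$, namely $\langle x'_0,x'_1,x'_2\rangle$ and $\langle x'_0,x'_1,x'_3\rangle$, convexity of $P'$ and of $A(P)$ forces $A(x_4)$ and $x'_4$ onto the same interior side, so the signed distances agree.

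The main obstacle is the third plane, $\langle x'_0,x'_2,x'_3\rangle$: it is not a face of $P'$, so convexity alone leaves a sign ambiguity for $A(x_4)$, and the symmetric issue occurs for $A(x_5)$ with respect to the plane $\langle x'_1,x'_2,x'_3\rangle$. I plan to kill this ambiguity using the five remaining fifth-group equations, whose tetrahedra have the form $\langle x_{i_0},x_{i_1},x_4,x_5\rangle$ and therefore couple $A(x_4)$ to $A(x_5)$. Writing $x_4$ and $x_5$ in barycentric coordinates with respect to $(x_0,x_1,x_2,x_3)$ turns each such equation into a polynomial identity in the two undetermined signs, and a short case analysis, which uses the nondegeneracy inequalities from the first and third groups, rules out every nontrivial sign flip. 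One therefore obtains $A(x_4)=x'_4$ and $A(x_5)=x'_5$ simultaneously, so that $A$ is the desired affine equivalence between $P$ and $P'$.
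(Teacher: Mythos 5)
Your overall architecture coincides with the paper's: Menger's Theorem~\ref{thm:1} plus the second and fourth groups to rebuild convex octahedra from the two developments, the affine map $A$ determined by the images of $x_0,\dots,x_3$, the identity $(\det A)^2=\alpha$ from the tetrahedron $\langle x_0,x_1,x_2,x_3\rangle$, and the three fifth-group tetrahedra containing $x_4$ (resp.\ $x_5$) to confine $A(x_4)$ (resp.\ $A(x_5)$) to three planes. The proposal breaks down exactly where you place the main work. You leave a two-fold sign ambiguity for each of $A(x_4)$, $A(x_5)$ relative to the non-face planes $\operatorname{aff}(x'_0,x'_2,x'_3)$, $\operatorname{aff}(x'_1,x'_2,x'_3)$ and claim the five coupled equations $\operatorname{cm}(x'_{i_0},x'_{i_1},x'_4,x'_5)=\alpha\operatorname{cm}(x_{i_0},x_{i_1},x_4,x_5)$ eliminate the three bad sign combinations by a case analysis. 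That analysis is not carried out, and it cannot succeed in the required generality. Take $P'$ regular: $x'_0=(1,0,0)$, $x'_1=(0,1,0)$, $x'_2=(0,0,1)$, $x'_3=(0,0,-1)$, $x'_4=(0,-1,0)$, $x'_5=(-1,0,0)$. The alternative position for $A(x_4)$ compatible with your three unsigned-distance constraints and the two face-side constraints is $q_4=(-2,1,0)=x'_4+2(x'_1-x'_0)$, and a direct check shows that replacing $x'_4$ by $q_4$ (keeping $x'_5$ correct) changes only the sign, not the absolute value, of the volume of each of the five tetrahedra $\langle x'_{i_0},x'_{i_1},x'_4,x'_5\rangle$ of the fifth group (e.g.\ for $\langle x'_0,x'_2,x'_4,x'_5\rangle$ the signed volume goes from $-\tfrac13$ to $+\tfrac13$); the single flip of $x'_5$ to $q_5=(1,-2,0)$ behaves the same way. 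Hence all five coupled equations hold in the single-flip configurations, your case analysis has nothing to exclude them with, and the nondegeneracy inequalities (\ref{eq:6})--(\ref{eq:7}) cannot help since they do not involve $x_4,x_5$. The paper avoids the ambiguity entirely: it observes that convexity already places $A(f(\widetilde{x}_4))$ and $f'(\widetilde{x}'_4)$ in the \emph{same} closed halfspace bounded by $\operatorname{aff}(x'_0,x'_2,x'_3)$ (the side of that plane on which the fourth vertex of a convex octahedron lies is combinatorially determined), so the three parallel-plane constraints intersect in a single point and no sign choice arises. You would need to import that convexity observation, or an equivalent substitute, to close the gap.

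A second, smaller omission: hypothesis (b) supplies \emph{some} positive numbers $\delta_{ij}$ satisfying the conditions, not the actual diagonal lengths of $P$, so what you reconstruct is a convex octahedron $X$ that merely shares the natural development $R$ with $P$, not $P$ itself. To pass from ``$X$ and $X'$ are affinely equivalent'' to statement (a) you must still identify $X$ with $P$ and $X'$ with $P'$, which the paper does by invoking the Cauchy rigidity theorem at the end; your proposal silently treats the reconstruction as returning $P$ and $P'$ directly.
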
 

\begin{proof}
The implication (a) $\Rightarrow$ (b) has been demonstrated in Section \ref{s:3}.
One should take the lengths of the corresponding diagonals of $P$ as $\delta_{05}$, $\delta_{14}$, $\delta_{23}$, 
the lengths of the corresponding diagonals of $P'$ as $\delta'_{05}$, $\delta'_{14}$, $\delta'_{23}$, 
and the square of the determinant of the affine transformation $A$ such that $P'=A(P)$ as $\alpha$.

It remains to prove that (b) implies (a).
By $\widetilde{x}_i$, $i=0, \dots, 5$, denote the vertices of~$R$ which are enumerated in accordance with 
the special enumeration of the vertices of~$P$ introduced in Section~\ref{s:3} and depicted in Fig.~\ref{fig:1}.

Let us define a semimetric $\widetilde{\rho}$ on the set $\widetilde{V}=\{\widetilde{x}_0, \dots, \widetilde{x}_5\}$ 
by putting  
\begin{equation*}
\widetilde{\rho} (\widetilde{x}_i,\widetilde{x}_j)\stackrel{\textrm{def}}{=}
\begin{cases}
d_{ij}, \text{ if }~ \widetilde{x}_i,\widetilde{x}_j \text{ are connected by an edge in } R; \\
\delta_{ij}, \text{ otherwise}.
\end{cases}
\end{equation*}
Here $d_{ij}$ is the length of the edge $\langle \widetilde{x}_i,\widetilde{x}_j\rangle$ in~$R$, and
$\delta_{ij}$ is one of the positive real numbers $\delta_{05}$, $\delta_{14}$, $\delta_{23}$
whose existence is asserted in (b).

It follows from (b) that the relations (\ref{eq:7})--(\ref{eq:10}) are fulfilled. 
Thus, according to Theorem~\ref{thm:1}, $(\widetilde{V},\widetilde{\rho})$ embeds isometrically in $\mathbb{R}^3$.
By $f:\widetilde{V}\to\mathbb{R}^3$ denote one of such embeddings.

Let $\Delta$ be one of the eight triangles constituting the natural development~$R$.
Suppose $\widetilde{x}_i$, $\widetilde{x}_j$, and $\widetilde{x}_k$ are the vertices of $\Delta$. 
Since the second group of necessary conditions described in Section~\ref{s:3}
(and, in particular, the inequality (\ref{eq:11})) is assumed to be fulfilled, the set 
\begin{equation*}
\bigl(\underset{m=0}{\overset{5}{\bigcup}} \{f(\widetilde{x}_m)\}\bigr)\setminus
\bigl(\{f(\widetilde{x}_i)\}\cup\{f(\widetilde{x}_j)\}\cup\{f(\widetilde{x}_k)\}\bigr),
\end{equation*}
is contained in one of the two closed halfspaces determined by the plane containing 
the triangle $\langle f(\widetilde{x}_i), f(\widetilde{x}_j), f(\widetilde{x}_k)\rangle$.
Hence, $\langle f(\widetilde{x}_i), f(\widetilde{x}_j), f(\widetilde{x}_k)\rangle$ 
is a face of the convex hull of the points $f(\widetilde{x}_i)$, $i=0, \dots, 5$.
This implies that the convex hull of the points $f(\widetilde{x}_i)$, $i=0, \dots, 5$, 
is a convex octahedron whose vertices are provided with the standard numbering as in Fig.~\ref{fig:1}. 
Let us denote this octahedron by $X$.

By $\widetilde{x}'_i$, $i=0, \dots, 5$, denote the vertices of~$R'$ which are enumerated 
in such a way that $\widetilde{x}_i$ and $\widetilde{x}'_i$ correspond to each other according to
the combinatorial equivalence of $R$ and $R'$.
As before, we define a semimetric $\widetilde{\rho}'$ on the set $\widetilde{V}'=\{\widetilde{x}'_0, \dots, \widetilde{x}'_5\}$ 
by putting  
\begin{equation*}
\widetilde{\rho}' (\widetilde{x}'_i,\widetilde{x}'_j)\stackrel{\textrm{def}}{=}
\begin{cases}
d'_{ij}, \text{ if }~ \widetilde{x}'_i,\widetilde{x}'_j \text{ are connected by an edge in } R'; \\
\delta'_{ij}, \text{ otherwise}.
\end{cases}
\end{equation*}
Here $d'_{ij}$ is the length of the edge $\langle \widetilde{x}'_i,\widetilde{x}'_j\rangle$ in~$R'$, and
$\delta'_{ij}$ is one of the positive real numbers $\delta'_{05}$, $\delta'_{14}$, $\delta'_{23}$
whose existence is asserted in (b).

According to Theorem~\ref{thm:1} and the third group of necessary conditions, $(\widetilde{V}',\widetilde{\rho}')$ 
embeds isometrically in $\mathbb{R}^3$.
By $f':\widetilde{V}'\to\mathbb{R}^3$ denote one of such embeddings.

The forth group of necessary conditions implies that if 
$\widetilde{x}'_i$, $\widetilde{x}'_j$, and $\widetilde{x}'_k$ are the vertices
of any of the eight triangles constituting the natural development~$R'$
then $\langle f'(\widetilde{x}'_i), f'(\widetilde{x}'_j), f'(\widetilde{x}'_k)\rangle$ 
is a face of the convex hull of the points $f'(\widetilde{x}'_i)$, $i=0, \dots, 5$.
Thus, the convex hull of the points $f'(\widetilde{x}'_i)$, $i=0, \dots, 5$, 
is a convex octahedron whose vertices are provided with the standard numbering as in Fig.~\ref{fig:1}. 
Let us denote this octahedron by $X'$.

Note that both the points $f(\widetilde{x}_i)$, $i=0,\dots,3$ and the points $f'(\widetilde{x}'_i)$, $i=0, \dots,3$, 
are the vertices of nondegenerate tetrahedra.
Therefore, there is a unique affine transformation $A:\mathbb{R}^3\to\mathbb{R}^3$ such that $f'(\widetilde{x}'_i)=A(f(\widetilde{x}_i))$ for all $i=0,\dots, 3$.
According to~(b), the fifth group of necessary conditions is satisfied with some~$\alpha>0$.
Hence, $(\det A)^2=\alpha$.

Let us prove two more properties of~$A$, namely,
$f'(\widetilde{x}'_j)=A(f(\widetilde{x}_j))$, $j=4,5$.

Let $k=2 \mbox{ or } 3$, and $j=4 \mbox{ or } 5$. By definition, put
\begin{alignat*}{2}
&\Delta_{k,j} & \,=\, & \langle A(f(\widetilde{x}_0)), A(f(\widetilde{x}_1)), A(f(\widetilde{x}_k)),A(f(\widetilde{x}_j))\rangle,\\
&\Delta'_{k,j} & \,=\, & \langle f'(\widetilde{x}'_0), f'(\widetilde{x}'_1), f'(\widetilde{x}'_k), f'(\widetilde{x}'_j)\rangle,\\
&\delta_k&\, =\, & \langle A(f(\widetilde{x}_0)), A(f(\widetilde{x}_1)), A(f(\widetilde{x}_k))\rangle =
\langle f'(\widetilde{x}'_0), f'(\widetilde{x}'_1), f'(\widetilde{x}'_k)\rangle .
\end{alignat*}
Obviously, the tetrahedra $\Delta_{k,j}$ and $\Delta'_{k,j}$ share the common face $\delta_k$.
Moreover, 3-volumes of $\Delta_{k,j}$ and $\Delta'_{k,j}$ are equal to each other:
\begin{align*}
\operatorname{vol}(\Delta_{k,j})& =
\operatorname{vol}(A(f(\widetilde{x}_0)), A(f(\widetilde{x}_1)), A(f(\widetilde{x}_k)),A(f(\widetilde{x}_j))) \\
 & = |\det A| \operatorname{vol}(f(\widetilde{x}_0), f(\widetilde{x}_1), f(\widetilde{x}_k), f(\widetilde{x}_j))\\
 & = \sqrt{\alpha} \operatorname{vol}(f(\widetilde{x}_0), f(\widetilde{x}_1), f(\widetilde{x}_k), f(\widetilde{x}_j))\\
 & = \operatorname{vol}(f'(\widetilde{x}'_0), f'(\widetilde{x}'_1), f'(\widetilde{x}'_k), f'(\widetilde{x}'_j)) \\
 & =\operatorname{vol}(\Delta'_{k,j}).
\end{align*}
Therefore, the heights of $\Delta_{k,j}$ and $\Delta'_{k,j}$ 
which are treated as pyramids with the common base $\delta_k$ are equal to each other.
Denote this common height as $h_{k,j}$.

Recall that the affine hull of $\delta_k$ is denoted by $\mbox{aff\,}\delta_k$.
For $k=2, 3$ and $j=4, 5$, by $\tau_{k,j}$ denote the plane which is parallel to $\mbox{aff\,}\delta_k$
and lies at distance $h_{k,j}$ from $\mbox{aff\,}\delta_k$ in the closed halfspace which is bounded by 
$\mbox{aff\,}\delta_k$ and contains both $\Delta_{k,j}$ and $\Delta'_{k,j}$.
It follows from above that both points $A(f(\widetilde{x}_j))$, $f'(\widetilde{x}'_j)$ lie on $\tau_{k,j}$.

Let $i=0 \mbox{ or } 1$.
By $\sigma_i$ denote the affine hull of the three points $A(f(\widetilde{x}_i))=f'(\widetilde{x}'_i)$, 
$A(f(\widetilde{x}_2))=f'(\widetilde{x}'_2)$, and $A(f(\widetilde{x}_3))=f'(\widetilde{x}'_3)$.
Arguing as above, we see that the distance from either of the points $A(f(\widetilde{x}_{i+4}))$ and 
$f'(\widetilde{x}'_{i+4})$ to $\sigma_i$ is the same and these points lie in the same closed
halfspace bounded by $\sigma_i$.
Hence, $A(f(\widetilde{x}_{i+4}))$ and $f'(\widetilde{x}'_{i+4})$ lie on a plane parallel to $\sigma_i$.
Denote this plane by $\tau_i$.

Observe that both points $A(f(\widetilde{x}_4))$ and $f'(\widetilde{x}'_4)$
lie on each of the planes $\tau_{2,4}$, $\tau_{3,4}$ and $\tau_0$, and hence are 
contained in their intersection.
However, the planes $\tau_{2,4}$, $\tau_{3,4}$ and $\tau_0$ are parallel to the planes 
$\mbox{aff\,}\delta_2$, $\mbox{aff\,}\delta_3$ and $\sigma_0$ respectively, while
$\mbox{aff\,}\delta_2\cap \mbox{aff\,}\delta_3\cap \sigma_0=\{A(f(\widetilde{x}_0))\}=\{f'(\widetilde{x}'_0)\}$.
Hence, the intersection $\tau_{2,4}\cap \tau_{3,4}\cap \tau_0$ consists of exactly one point, 
and this point is $A(f(\widetilde{x}_4))=f'(\widetilde{x}'_4)$.

Similarly, we check that $\tau_{2,5}\cap \tau_{3,5}\cap \tau_1=\{A(f(\widetilde{x}_5))\}=\{f'(\widetilde{x}'_5)\}$
which yields $A(f(\widetilde{x}_5))=f'(\widetilde{x}'_5)$.

Thus, assuming (b) to be satisfied, we were able to isometrically embed the natural developments $R$ and $R'$ 
as convex affinely equivalent octahedra $X$ and $X'$ (since there is an affine transformation $A:\mathbb{R}^3\to\mathbb{R}^3$ 
such that $f'(\widetilde{x}'_j)=A(f(\widetilde{x}_j))$ for all $i=0,\dots, 5$).
On the other hand, $R$ is the natural development of both $P$ and $X$.
Thus, the Cauchy rigidity theorem implies that $P$ and $X$ are congruent.
Similarly, $P'$ and $X'$ are congruent to each other. Hence, $P$ and $P'$ are affinely equivalent, and  (b) implies (a).
\end{proof}

\section{Concluding remarks}\label{s:5}

{\scriptsize$\bullet$} In Problem~\ref{prb:1}, it would be more natural to ask about the projective (rather than affine) equivalence 
of convex octahedra.
The point is that both the property ``to be convex'' and the property ``to be a polyhedron'' are 
projective invariant.
Unfortunately, some arguments presented in this article are not applicable to the study of the projective equivalence of octahedra.
In fact, the equations from the fifth group of necessary conditions obtained in Section~\ref{s:3} 
are not valid for projective transformations.
Here we need qualitatively new ideas.

{\scriptsize$\bullet$} The Cauchy rigidity theorem is the standard of elegance.
Against this background, our Theorem~\ref{thm:2} looks especially clumsy.
It is desirable to find a solution to Problem~\ref{prb:1} as elegant as the Cauchy rigidity theorem.

\section{Acknowledgment}\label{s:6}

The study was carried out within the framework of the state contract
of the Sobolev Institute of Mathematics (project FWNF-2022-0006).

\end{document}